\newtheorem{thm}{Theorem}
\newdefinition{dfn}{Definition}
\newdefinition{ex}{Example}
\newproof{proof}{Proof}
\newtheorem{cor}{Corollary}
\newtheorem{Lemma}{Lemma}
\begin{document}

\begin{frontmatter}



\title{Green's Function of a generalized boundary value transmission problem}


\author[rvt]{K. Aydemir}
\ead{kadriye.aydemir@gop.edu.tr} \cortext[cor1]{Corresponding Author
(Tel: +90 356 252 16 16, Fax: +90 356 252 15 85)}

\address[rvt]{Department of Mathematics, Faculty of Arts and Science, Gaziosmanpa\c{s}a University,\\
 60250 Tokat, Turkey}
 \journal{}

\begin{abstract}
In this study we give a comprehensive treatment for a new type
discontinuous BVP's  boundary conditions and transmission
(impulsive, jump or interface) conditions. A self-adjoint linear
operator  is defined in a suitable Hilbert space  such that the
eigenvalues of such a problem coincide with those of this operator.
 Then by suggesting an own approaches we construct Green's function for problem under
consideration and  showed that it has a compact resolvent for
corresponding inhomogeneous problem.
\end{abstract}

\begin{keyword}
Sturm-Liouville problems, Green's function, transmission conditions,
resolvent operator. \vskip0.3cm\textbf{AMS subject classifications}
: 34B24, 34B27


\end{keyword}

\end{frontmatter}


\section{Introduction}

 Boundary value problems can be investigate
through the methods of Green's function and eigenfunction expansion.
The main tool for solvability analysis of such problems is the
concept of Green's function. The concept of Green's functions is
very close to physical intuition (see \cite{du}). If one knows the
Green's function of a problem one can write down its solution in
closed form as linear combinations of integrals involving the
Green's function and the functions appearing in the inhomogeneities.
Green's functions can often be found in an explicit way, and in
these cases it is very efficient to solve the problem in this way.
Determination of Green's functions is also possible using
Sturm-Liouville theory. This leads to series representation of
Green's functions (see \cite{lev}). In this paper we shall
investigate a class  of BVP's which consist of the Sturm-Liouville
equation
\begin{equation}\label{1}
\mathcal{L}(u):=-\rho(x)u^{\prime \prime }(x)+ q(x)u(x)=\lambda
u(x), x\in\Omega
\end{equation}
together with eigenparameter-dependent boundary conditions at end
 points $x=a, b$
\begin{equation}\label{2}
\mathcal{L}_1(u):=\delta_1u(a)-\delta_2u'(a)-\lambda(\delta_3u(a)-\delta_4u'(a))=0,
\end{equation}
\begin{equation}\label{3}
\mathcal{L}_2(u):=\gamma_1u(b)-\gamma_2u'(b)+\lambda(\gamma_3u(b)-\gamma_4u'(b))=0,
\end{equation}
and  the transmission conditions at interior point $\xi_i \in (a,b),
i=1,2,...n$
\begin{equation}\label{4}
\pounds_i(u)=\delta^{-}_{i1}u'(\xi_i-)+\delta^{-}_{i0}u(\xi_i-)+\delta^{+}_{i1}u'(\xi_i+)+\delta^{+}_{i0}u(\xi_i+)=0,
\end{equation}
\begin{equation}\label{5}
\mathfrak{L}_i(u)=\gamma^{-}_{i1}u'(\xi_i-)+\gamma^{-}_{i0}u(\xi_i-)+\gamma^{+}_{i1}u'(\xi_i+)+\gamma^{+}_{i0}u(\xi_i+)=0,
\end{equation}
where  $\Omega=\bigcup\limits_{i=1}^{n+1}(\xi_{i-1}, \xi_{i}), \
a:=\xi_0, \ b:=\xi_{n+1}, \ \rho(x)=\rho_i^{2}>0  \ \textrm{for} \ x
\in \Omega_i:= (\xi_{i-1}, \xi_{i}), \ i=1,2,...n+1 $, the potential
$q(x)$ is real-valued function which continuous in each of the
intervals $(\xi_{i-1}, \xi_{i})$, and has a finite limits $q(
\xi_{i}\mp0)$, $\lambda$ \ is a complex spectral parameter, \
$\delta_{k}, \ \gamma_{k}\ (k=1,2,3,4), \ \delta^{\pm}_{ij}, \
\gamma^{\pm}_{ij} \ (i=1,2,...n \ \textrm{and} \ j=0,1)$ are real
numbers. We want emphasize that the boundary value problem studied
here differs from the standard boundary value problems in that it
contains transmission conditions and the eigenvalue-parameter
appears not only in the differential equation, but also in the
boundary conditions. Moreover the coefficient functions may have
discontinuity at one interior point. Naturally, eigenfunctions of
this problem may have discontinuity at the one inner point of the
considered interval. The problems with transmission conditions has
become an important area of research in recent years because of the
needs of modern technology, engineering and physics. Many of the
mathematical problems encountered in the study of
boundary-value-transmission problem cannot be treated with the usual
techniques within the standard framework of  boundary value problem
(see \cite{ji}). Note that some special cases of this problem arise
after an application of the method of separation of variables to a
varied assortment of physical problems. For example, some boundary
value problems with transmission conditions arise in heat and mass
transfer problems \cite{lik}, in vibrating string problems when the
string loaded additionally with point masses \cite{tik}, in
diffraction problems \cite{voito}. Also some problems with
transmission conditions which arise in mechanics (thermal conduction
problems for a thin laminated plate) were studied in  \cite{tite}.
\section{The fundamental solutions and characteristic function}
We shall define two solutions of the equation (\ref{1}) on the whole
$\Omega=\bigcup\limits_{i=1}^{n+1}(\xi_{i-1}, \xi_{i})$ by
$\phi(x,\lambda )=\phi_i(x,\lambda ) \ \textrm{for} \ x \in \Omega_i
\ \textrm{and} \ \chi(x,\lambda )=\chi_i(x,\lambda ) \ \textrm{for}
\  x \in \Omega_i (i=1,2,...,n+1)$
\begin{displaymath} \phi (x,\lambda
)=\left \{
\begin{array}{c}
\phi _{1}(x,\lambda ),  x\in \lbrack a,\xi_1) \\
\phi _{2}(x,\lambda ),  x\in (\xi_1,\xi_2)\\
\phi _{3}(x,\lambda ),  x\in (\xi_2,\xi_3) \\...\\
\phi _{n+1}(x,\lambda ),  x\in (\xi_n,b]\\
\end{array}\right.
\textrm{and} \ \chi(x,\lambda)=\left \{\begin{array}{ll}
\chi _{1}(x,\lambda ),  x\in \lbrack a,\xi_1) \\
\chi _{2}(x,\lambda ),  x\in (\xi_1,\xi_2)\\
\chi _{3}(x,\lambda ),  x\in (\xi_2,\xi_3) \\...\\
\chi _{n+1}(x,\lambda ),  x\in (\xi_n,b]\\
\end{array}\right.
\end{displaymath}
where$\phi_i(x,\lambda ) \ \ \textrm{and} \ \chi_i(x,\lambda )$ are
defined recurrently by following procedure. Let $\phi _{1}(x,\lambda
) \ \textrm{and} \ \chi _{n+1}(x,\lambda )$ be solutions of the
equation $(\ref{1})$ on $[a,\xi_1) \ \textrm{and } \ (\xi_n,b]$
satisfying initial conditions
\begin{eqnarray}&&\label{7}
u(a,\lambda)=\delta_2-\lambda \delta_4, \ u'(a,\lambda )=\delta_1 -\lambda \delta_3 \\
&&\label{10} u(b,\lambda)=\gamma_2+\lambda \gamma_4, \ u'(b,\lambda
)=\gamma_1 -\lambda \gamma_3
\end{eqnarray}
respectively. In terms of these solution we shall define the other
solutions $\phi _{i+1}(x,\lambda )\ \textrm{and}  \ \chi
_{i}(x,\lambda )$ by initial conditions
\begin{eqnarray}\label{8}
&&\phi _{i+1}(\xi_{i}+,\lambda)
=\frac{1}{\theta_{i12}}(\theta_{i23}\phi
_{i}(\xi_{i}-,\lambda)+\theta_{i24}\frac{\partial\phi
_{i}(\xi_{i}-,\lambda)}{\partial x})
\\ &&\label{9} \frac{\partial\phi _{i+1}(\xi_{i}+,\lambda)}{\partial x}
=\frac{-1}{\theta_{i12}}(\theta_{i13}\phi
_{i}(\xi_{i}-,\lambda)+\theta_{i14}\frac{\partial\phi
_{i}(\xi_{i}-,\lambda)}{\partial x})
\\ \nonumber
\textrm{and}
 \\ &&\label{11}
\chi _{i}(\xi_{i}-,\lambda)
=\frac{-1}{\theta_{i34}}(\theta_{i14}\chi
_{i+1}(\xi_{i}+,\lambda)+\theta_{i24}\frac{\partial\chi
_{i+1}(\xi_{i}+,\lambda)}{\partial x})
\\ && \label{12} \frac{\partial\chi _{i}(\xi_{i}-,\lambda)}{\partial x})
=\frac{1}{\theta_{i34}}(\theta_{i13}\chi
_{i+1}(\xi_{i}+,\lambda)+\theta_{i23}\frac{\partial\chi
_{i+1}(\xi_{i}+,\lambda)}{\partial x})
\end{eqnarray}
respectively, where $\theta_{ijk} \ (1\leq j< k \leq 4)$ denotes the
determinant of the j-th
 and k-th columns of the matrix
  $$
\left[\begin{array}{cccc}
 \delta^{+}_{i1} & \delta^{+}_{i0} & \delta^{-}_{i1} & \delta^{-}_{i0}   \\
 \gamma^{+}_{i1} & \gamma^{+}_{i0} & \gamma^{-}_{i1} & \gamma^{-}_{i0}
\end{array} %
 \right]  $$
for $i=1,2,...n.$ Everywhere in below we shall assume that
$\theta_{ijk}>0$ for all i,j,k.  The existence and uniqueness of
these solutions are follows from well-known theorem of ordinary
differential equation theory. Moreover by applying the method of
\cite{ka} we can prove that each of these solutions are entire
functions of parameter $\lambda \in \mathbb{C}$ for each fixed $x$.
Taking into account (\ref{8})-(\ref{12}) and the fact that the
Wronskians $ \omega_i(\lambda):=W[\phi_i(x,\lambda
),\chi_i(x,\lambda )]$ (i=1,2,...n+1) are independent of variable
$x$  we have
\begin{eqnarray*}
\omega_{i+1}(\lambda )&=&\phi_{i+1}(\xi_i+,\lambda
)\frac{\partial\chi_{i+1}(\xi_i+,\lambda )}{\partial x}-\frac{\partial\phi_{i+1}(\xi_i+,\lambda )}{\partial x}\chi_{i+1}(\xi_i+,\lambda ) \\
 &=&\frac{\theta_{i34}}{\theta_{i12}}(\phi_i(\xi_i-,\lambda )\frac{\partial\chi_i(\xi_i,\lambda )}{\partial x}
 -\frac{\partial\phi_i(\xi_i-,\lambda )}{\partial x}\chi_i(\xi_i-,\lambda )) \\
&=&\frac{\theta_{i34}}{\theta_{i12}} \omega_i(\lambda
)=\prod_{j=1}^{i}\frac{\theta_{j34}}{\theta_{j12}}\omega_1(\lambda )
\ (i=1,2,...n).
\end{eqnarray*}
It is convenient to define  the characteristic function  \
$\omega(\lambda)$ for our problem $(\ref{1})-(\ref{3})$ as
$$
\omega(\lambda):=\omega_1(\lambda)
=\prod_{j=1}^{i}\frac{\theta_{j12}}{
\theta_{i34}}\omega_{i+1}(\lambda ) \ (i=1,2,...n).
$$
Obviously, $\omega(\lambda)$ is an entire function. By applying the
technique of \cite{osh6} we can prove that there are infinitely many
eigenvalues $\lambda_{k}, \ k=1,2,...$ of the problem
$(\ref{1})-(\ref{5})$ which are coincide with the zeros of
characteristic function  \ $\omega(\lambda)$.
\section{\textbf{Operator treatment in a adequate Hilbert space }}
To analyze the spectrum of the BVTP $(\ref{1})-(\ref{5})$ we shall
construct an adequate  Hilbert space and define a symmetric linear
operator in it  such a way that the considered problem can be
interpreted as the eigenvalue problem of this operator. For this we
assume that $$ \ \kappa_{1}:= \left[%
\begin{array}{cccc}
  \delta_3 & \delta_4  \\
  \delta_1 & \delta_2
  \\
\end{array}%
 \right]>0,  \  \kappa_{2}:= \left[%
\begin{array}{cccc}
  \gamma_3 & \gamma_4  \\
  \gamma_1 & \gamma_2
  \\
\end{array} %
 \right]>0 $$
and introduce modified inner product on direct sum space
$\mathcal{H}_{1}= L_{2}(a,\xi_1)\oplus
L_{2}(\xi_1,\xi_2)\oplus...\oplus L_{2}(\xi_{n-1},\xi_n)\oplus
 L_{2}(\xi_n,b) \ \textrm{and} \
\mathcal{H}=\mathcal{H}_{1}\oplus \mathbb{C}^{2}$ by
\begin{eqnarray*}
&&<f,g>_{\mathcal{H}_{1}}:=\sum\limits_{k=0}^{n}\frac{1}{\rho_{k+1}^{2}}\prod\limits_{i=0}^{k}\theta_{i12}
\prod\limits_{i=k+1}^{n+1}\theta_{i34}\int\limits_{\xi_{k}+}^{\xi_{k+1}-}f(x)\overline{g(x)}dx
\end{eqnarray*}
 where $\theta_{012}=\theta_{(n+1)34}=1$ and
\begin{eqnarray*}
\label{ic} <F,G>_{\mathcal{H}}:=[f,g]_{\mathcal{H}_{1}} +
\prod\limits_{i=0}^{n}\theta_{i34}\frac{f_{1}\overline{g_{1}}}{\kappa_{1}}
+\prod\limits_{i=0}^{n}\theta_{i12}\frac{f_{2}\overline{g_{2}}}{\kappa_{2}}
\end{eqnarray*}
 for $F=\left(
  \begin{array}{c}
   f(x),
    f_{1}, f_{2} \\
  \end{array}
\right)$,\  $G=\left(
  \begin{array}{c}
    g(x),
  g_{1}, g_{2}  \\
  \end{array}
\right)$ $\in \mathcal{H}$ respectively. Obviously, these inner
products are equivalent to the standard inner products, so,
 $(\mathcal{H}, [.,.]_{\mathcal{H}})$
and $(\mathcal{H}_{1}, [.,.]_{\mathcal{H}_{1}})$ are also Hilbert
spaces. Let us now define the boundary functionals
\begin{eqnarray*}&&B_{a}[f]:= \delta_1f(a)-\delta_2f'(a),  \ \  B'_{a}[f]:=
\delta_3f(a)-\delta_4f'(a)\\
&&B_{b}[f]:=\gamma_1f(b)-\gamma_2f'(b),  \ \  B'_{b}[f]:=
\gamma_3f(b)-\gamma_4f'(b)
\end{eqnarray*}
and construct the operator $\mathfrak{R}:\mathcal{H}\rightarrow
\mathcal{H}$ with the domain
\begin{eqnarray*}\label{2.3}
dom(\mathfrak{R}):= &&\bigg \{F=(f(x), f_{1}, f_{2}):f(x), f'(x) \
\in \bigcap\limits_{i=1}^{i=n+1} AC_{loc}(\xi_{i-1}, \xi_{i}),\nonumber\\
&&\textrm{and has a finite limits} \ f(\xi_{i}\mp0) \ \textrm{and} \
f'(\xi_{i}\mp0), \ \mathcal{L }F \in
L_{2}[a,b],\nonumber\\&&\pounds_i(f)=\mathfrak{L}(f)=0,\ f_{1}=
B'_{a}[f], f_{2}=-B'_{b}[f]\bigg \}
\end{eqnarray*}
and action low$$ \mathcal{L}(f(x),B'_{a}[f], -B'_{b}[f])=(\ell f,
B_{a}[f], B_{b}[f]).$$ Then the problem $(\ref{1})-(\ref{5})$ can be
written in the operator equation form as
$$\mathfrak{R}F=\lambda F, \ \ F=(f(x), B'_{a}[f], -B'_{b}[f])
 \in dom(\mathfrak{R})$$
in the Hilbert space $\mathcal{H}$.
\begin{thm}\label{2.1}
The linear operator $\mathfrak{R}$  is symmetric.
\end{thm}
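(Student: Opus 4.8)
The plan is to verify directly the symmetry identity $\langle\mathfrak{R}F,G\rangle_{\mathcal{H}}=\langle F,\mathfrak{R}G\rangle_{\mathcal{H}}$ for all $F=(f(x),B'_{a}[f],-B'_{b}[f])$ and $G=(g(x),B'_{a}[g],-B'_{b}[g])$ in $dom(\mathfrak{R})$; here $dom(\mathfrak{R})$ is dense in $\mathcal{H}$, since it already contains every triple $(h,0,0)$ with $h$ smooth and compactly supported in $\bigcup_{i}\Omega_{i}$, together with functions nonvanishing near the end points that exhaust $\mathbb{C}^{2}$. Inserting the action law of $\mathfrak{R}$ and the definition of $\langle\cdot,\cdot\rangle_{\mathcal{H}}$, the difference $\langle\mathfrak{R}F,G\rangle_{\mathcal{H}}-\langle F,\mathfrak{R}G\rangle_{\mathcal{H}}$ splits into the $\mathcal{H}_{1}$-part $\langle\ell f,g\rangle_{\mathcal{H}_{1}}-\langle f,\ell g\rangle_{\mathcal{H}_{1}}$ and the two finite-dimensional pieces coming from the $\mathbb{C}^{2}$ summand, one built from $B_{a},B'_{a},\kappa_{1}$ and one from $B_{b},B'_{b},\kappa_{2}$.

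For the $\mathcal{H}_{1}$-part I would integrate by parts twice on each subinterval $\Omega_{k+1}=(\xi_{k},\xi_{k+1})$, $k=0,\dots,n$. The $q$-term drops out because $q$ is real, and the factor $\rho_{k+1}^{-2}$ in the weight of $\langle\cdot,\cdot\rangle_{\mathcal{H}_{1}}$ cancels the $\rho_{k+1}^{2}$ carried by $\ell$, so the contribution of $\Omega_{k+1}$ is the weighted Lagrange bracket $\big(\prod_{i=0}^{k}\theta_{i12}\prod_{i=k+1}^{n+1}\theta_{i34}\big)[f'\overline{g}-f\overline{g}']_{\xi_{k}+}^{\xi_{k+1}-}$. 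Summing over $k$ and regrouping the brackets according to the node at which they are evaluated, the end points $a,b$ each retain a single bracket, whereas each interior node $\xi_{i}$ ($1\le i\le n$) collects two---one from $\Omega_{i}$ at $\xi_{i}-$ and one from $\Omega_{i+1}$ at $\xi_{i}+$---whose weights combine so that the net contribution at $\xi_{i}$ reduces, up to a nonzero factor, to $\theta_{i12}(f'\overline{g}-f\overline{g}')(\xi_{i}+)-\theta_{i34}(f'\overline{g}-f\overline{g}')(\xi_{i}-)$.

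The crux---and the step I expect to cause the most trouble---is to show this last expression vanishes, and this is exactly where the transmission conditions enter. Since $F,G\in dom(\mathfrak{R})$, the four-vectors $(f'(\xi_{i}+),f(\xi_{i}+),f'(\xi_{i}-),f(\xi_{i}-))$ and its $g$-analogue lie in the kernel of the real $2\times4$ coefficient matrix of $\pounds_{i},\mathfrak{L}_{i}$, and, because that matrix is real, so does the conjugate of the $g$-vector. Writing the matrix as $[A\mid B]$, where $A$ and $B$ are the blocks of its first two and its last two columns with $\det A=\theta_{i12}\neq0$ and $\det B=\theta_{i34}\neq0$, the kernel condition forces the pair of values at $\xi_{i}+$ of each such vector to be $-A^{-1}B$ applied to its pair of values at $\xi_{i}-$; forming the $2\times2$ determinant of the two pairs at $\xi_{i}+$ and comparing it with that of the two pairs at $\xi_{i}-$ then yields precisely $\theta_{i12}(f'\overline{g}-f\overline{g}')(\xi_{i}+)=\theta_{i34}(f'\overline{g}-f\overline{g}')(\xi_{i}-)$. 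This is the same determinant identity already met in Section~2, where it produced $\omega_{i+1}(\lambda)=(\theta_{i34}/\theta_{i12})\omega_{i}(\lambda)$. Consequently all interior brackets cancel and only the two end-point brackets survive.

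Finally I would identify the surviving end-point brackets with the $\mathbb{C}^{2}$ pieces. Expanding with $B_{a}[f]=\delta_{1}f(a)-\delta_{2}f'(a)$ and $B'_{a}[f]=\delta_{3}f(a)-\delta_{4}f'(a)$ gives the elementary identity $\kappa_{1}(f'\overline{g}-f\overline{g}')(a)=B'_{a}[f]\overline{B_{a}[g]}-B_{a}[f]\overline{B'_{a}[g]}$, and similarly $\kappa_{2}(f'\overline{g}-f\overline{g}')(b)=B'_{b}[f]\overline{B_{b}[g]}-B_{b}[f]\overline{B'_{b}[g]}$, with $\kappa_{1},\kappa_{2}>0$ the very normalizers appearing in $\langle\cdot,\cdot\rangle_{\mathcal{H}}$. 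Substituting $F_{1}=B'_{a}[f]$, $F_{2}=-B'_{b}[f]$ together with $(\mathfrak{R}F)_{1}=B_{a}[f]$, $(\mathfrak{R}F)_{2}=B_{b}[f]$ into the $\mathbb{C}^{2}$ terms and using the end-point conventions $\theta_{012}=\theta_{(n+1)34}=1$, one checks that the constant multiplying the leftover bracket at $a$ coincides with the coefficient $\prod_{i=0}^{n}\theta_{i34}/\kappa_{1}$ of the corresponding $\mathbb{C}$-term (and likewise at $b$ with $\prod_{i=0}^{n}\theta_{i12}/\kappa_{2}$), so the two cancel. Hence $\langle\mathfrak{R}F,G\rangle_{\mathcal{H}}-\langle F,\mathfrak{R}G\rangle_{\mathcal{H}}=0$ and $\mathfrak{R}$ is symmetric. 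Apart from the kernel--determinant argument at the interior nodes, everything here is bookkeeping of the $\theta$-weights and of which subinterval each boundary value sits in.
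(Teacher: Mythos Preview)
Your proposal is correct and follows essentially the same route as the paper: integrate by parts on each subinterval to reduce $\langle\mathfrak{R}F,G\rangle_{\mathcal{H}}-\langle F,\mathfrak{R}G\rangle_{\mathcal{H}}$ to weighted Wronskians at the nodes, use the transmission conditions to kill the interior contributions via the identity $\theta_{i12}W(f,\overline{g};\xi_i+)=\theta_{i34}W(f,\overline{g};\xi_i-)$, and cancel the end-point Wronskians against the $\mathbb{C}^{2}$ pieces through the $B_a,B'_a,\kappa_1$ and $B_b,B'_b,\kappa_2$ identities. Your kernel--determinant derivation of the interior identity is in fact more explicit than the paper's, which simply invokes the transmission data without spelling out the $2\times 4$ matrix argument.
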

\begin{proof}
By applying the method of  \cite{osh6} it is not difficult to show
that $dom(\mathfrak{R})$ is dense in the Hilbert space
$\mathcal{H}$. Now let $F=(f(x),B'_{a}[f],
-B'_{b}[f]),G=(g(x),B'_{a}[g], -B'_{b}[g]) \in dom(\mathfrak{R}).$
By partial integration we have
\begin{eqnarray}\label{2.2}
&&<\mathfrak{R}F,G>_{\mathcal{H}}-<F,\mathfrak{R}G>_{\mathcal{H}} =
\theta_{134}\theta_{234}...\theta_{n34}(W(f, \overline{g};\xi_1-)-
W(f,
\overline{g};a)) \nonumber\\
&&+ \theta_{112}\theta_{234}...\theta_{n34}( W(f,
\overline{g};\xi_2-) -  W(f,
\overline{g};\xi_1+))\nonumber\\
&& +...
+\theta_{112}\theta_{212}...\theta_{n12}(W(f,\overline{g};b)-
W(f,\overline{g};\xi_n+))
\nonumber\\
&&+\frac{1}{\kappa_{1}}\prod\limits_{i=0}^{n}\theta_{i34}(B_{a}[f]\overline{B'_{a}[g]}-B'_{a}[f]\overline{B_{a}[g]})\nonumber\\
&&+
\frac{1}{\kappa_{2}}\prod\limits_{i=0}^{n}\theta_{i12}(B'_{b}[f]\overline{B_{b}[g]}-B_{b}[f]\overline{B'_{b}[g]})
\end{eqnarray}
where, as usual, $W(f, \overline{g};x)$ denotes the Wronskians of
the functions $f$ \textrm{and} $ \overline{g}$. From the definitions
of boundary functionals we get that
\begin{eqnarray}\label{cl} &&B_{a}[f]\overline{B'_{a}[g]}-B'_{a}[f]\overline{B_{a}[g]}=\kappa_{1}
W(f,\overline{g};a), \\&& \label{c2}
B'_{b}[f]\overline{B_{b}[g]}-B_{b}[f]\overline{B'_{b}[g]}=-\kappa_{2}
W(f,\overline{g};b)
\end{eqnarray}
Further, taking in view the definition of $\mathcal{L}$ and initial
conditions $(\ref{7})-(\ref{12})$ we derive that
\begin{eqnarray}\label{c3}\theta_{i34}W(f, \overline{g};\xi_i-)
=\theta_{i12} W(f, \overline{g};\xi_i+) \  i=1,2...n
\end{eqnarray}
Finally, substituting  (\ref{cl}),  (\ref{c2}) and (\ref{c3}) in
(\ref{2.2}) we have
\[<\mathfrak{R}F,G>_{\mathcal{H}}=<F,\mathfrak{R}G>_{\mathcal{H}} \  \textrm{for \ every} \  F,G \in dom(\mathfrak{R}), \] so the operator $\mathfrak{R}$ is
symmetric in $\mathcal{H}$. The proof is complete.
\end{proof}

\begin{thm}\label{2.1}
The operator $\mathfrak{R}$  is self-adjoint in $\mathcal{H}$ .
\end{thm}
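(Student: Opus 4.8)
The plan is to build on the symmetry of $\mathfrak{R}$ just established, which already gives $\mathfrak{R}\subseteq\mathfrak{R}^{*}$, and to prove the reverse inclusion $dom(\mathfrak{R}^{*})\subseteq dom(\mathfrak{R})$. Since $dom(\mathfrak{R})$ is dense in $\mathcal{H}$ (by the argument recalled in the previous proof), $\mathfrak{R}^{*}$ is well defined, and I would fix an arbitrary $G=(g(x),g_{1},g_{2})\in dom(\mathfrak{R}^{*})$, set $\mathfrak{R}^{*}G=\widetilde{G}=(\widetilde{g}(x),\widetilde{g}_{1},\widetilde{g}_{2})$, and work throughout from the defining identity $<\mathfrak{R}F,G>_{\mathcal{H}}=<F,\widetilde{G}>_{\mathcal{H}}$, valid for every $F=(f(x),B'_{a}[f],-B'_{b}[f])\in dom(\mathfrak{R})$.

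The first step is to test with those $F=(f,0,0)\in dom(\mathfrak{R})$ whose first component $f$ is smooth and compactly supported inside a single open subinterval $\Omega_{i}$; for such $F$ every boundary and transmission functional vanishes trivially, and the identity reduces to $\int_{\Omega_{i}}(\ell f)\,\overline{g}\,dx=\int_{\Omega_{i}}f\,\overline{\widetilde{g}}\,dx$. The classical du~Bois-Reymond regularity argument used for ordinary Sturm--Liouville operators then yields $g,g'\in AC_{loc}(\xi_{i-1},\xi_{i})$ for every $i$, finite one-sided limits of $g$ and $g'$ at each $\xi_{i}$, and $\ell g=\widetilde{g}$ a.e., so $\ell g\in L_{2}[a,b]$. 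With $g$ now a genuine solution of the equation, I would substitute an \emph{arbitrary} $F\in dom(\mathfrak{R})$ into $0=<\mathfrak{R}F,G>_{\mathcal{H}}-<F,\mathfrak{R}^{*}G>_{\mathcal{H}}$ and integrate by parts exactly as in the derivation of (\ref{2.2}), so that $0$ appears as the same linear combination of the Wronskian jumps $W(f,\overline{g};\cdot)$ at $a,\xi_{1},\dots,\xi_{n},b$ together with the two $\mathbb{C}^{2}$-terms. Invoking the identities (\ref{cl})--(\ref{c2}), the endpoint contributions collapse to $\kappa_{1}^{-1}\prod\theta_{i34}\,B_{a}[f]\,\overline{\big(g_{1}-B'_{a}[g]\big)}$ and $\kappa_{2}^{-1}\prod\theta_{i12}\,B_{b}[f]\,\overline{\big(g_{2}+B'_{b}[g]\big)}$, while, after imposing on $f$ the homogeneous transmission conditions $\pounds_{i}(f)=\mathfrak{L}_{i}(f)=0$, the interior contribution at each $\xi_{i}$ reduces to a fixed linear form in $\pounds_{i}(g)$ and $\mathfrak{L}_{i}(g)$. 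Since $B_{a}[f]$ and $B_{b}[f]$ are unconstrained and, at each $\xi_{i}$, two independent degrees of freedom in the quadruple $\big(f(\xi_{i}-),f'(\xi_{i}-),f(\xi_{i}+),f'(\xi_{i}+)\big)$ survive the two transmission constraints---and all such local data are realised by members of $dom(\mathfrak{R})$ supported near a single point---each of these linear forms must vanish identically; the hypotheses $\theta_{ijk}>0$, $\kappa_{1}>0$, $\kappa_{2}>0$ render the relevant $2\times2$ determinants nonsingular, and therefore force $g_{1}=B'_{a}[g]$, $g_{2}=-B'_{b}[g]$ and $\pounds_{i}(g)=\mathfrak{L}_{i}(g)=0$ for all $i$. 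Thus $G\in dom(\mathfrak{R})$.

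Combining $dom(\mathfrak{R}^{*})\subseteq dom(\mathfrak{R})$ with the symmetry $\mathfrak{R}\subseteq\mathfrak{R}^{*}$ gives $\mathfrak{R}=\mathfrak{R}^{*}$. The main obstacle is the concluding part of the middle step: one must choose the test functions $F\in dom(\mathfrak{R})$ so as to decouple the endpoint terms at $a$ and $b$ from the $n$ interior terms and to realise arbitrary admissible jump data at an isolated $\xi_{i}$, and then check that the restriction of the Lagrange bracket to the subspace cut out by the homogeneous transmission conditions is nondegenerate---this is precisely where the sign conditions on the $\theta_{ijk}$ enter and why the particular weights in $<\cdot,\cdot>_{\mathcal{H}_{1}}$ and $<\cdot,\cdot>_{\mathcal{H}}$ are the right ones. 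An essentially equivalent alternative would be to show that the deficiency spaces of $\mathfrak{R}$ are trivial by solving $(\mathfrak{R}-\lambda I)F=H$ for $\mathrm{Im}\,\lambda\neq0$ by means of the fundamental solutions $\phi,\chi$ of Section~2 and the non-vanishing of $\omega(\lambda)$; but the domain-inclusion route above is the shorter one now that symmetry is in hand.
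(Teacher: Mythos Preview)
The paper states this theorem without proof: immediately after the symmetry theorem and its proof, self-adjointness is simply asserted and the text passes directly to the corollary on orthogonality. There is therefore no argument in the paper to compare against, and your proposal supplies what the paper omits.

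Your domain-inclusion strategy is the standard and correct one. One small refinement: at the endpoint $a$ the full contribution arising from the Lagrange bracket together with the $\mathbb{C}^{2}$-terms is
\[
\frac{1}{\kappa_{1}}\prod_{i}\theta_{i34}\Big(B_{a}[f]\,\overline{g_{1}-B'_{a}[g]}\;+\;B'_{a}[f]\,\overline{B_{a}[g]-\widetilde{g}_{1}}\Big),
\]
so you must vary $B_{a}[f]$ and $B'_{a}[f]$ independently (possible since $\kappa_{1}\neq0$ makes $(f(a),f'(a))\mapsto(B_{a}[f],B'_{a}[f])$ invertible) to conclude both $g_{1}=B'_{a}[g]$ and $\widetilde{g}_{1}=B_{a}[g]$; the analogous pair appears at $b$. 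With that adjustment your outline goes through.

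It is worth noting that the alternative you mention at the end---showing that $\mathfrak{R}-\lambda I$ is onto for non-real $\lambda$ by means of $\phi$, $\chi$ and the non-vanishing of $\omega(\lambda)$---is precisely what the paper does in Section~4 when it constructs the Green's function and the resolvent, although that section is placed \emph{after} the self-adjointness theorem rather than invoked as its proof. Either route closes the gap; the paper leaves the logical dependence implicit.
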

\begin{cor}\label{rem2}
 If $f(x)$ \textrm{and} $g(x)$   are eigenfunctions
corresponding to distinct eigenvalues, then they are ,,orthogonal"
in the sense of
\begin{eqnarray}\label{2.3}
<f,g>_{\mathcal{H}_{1}}
+\prod\limits_{i=0}^{n}\theta_{i34}\frac{B'_{a}[f]B'_{a}[g]}{\kappa_{1}}
+\prod\limits_{i=0}^{n}\theta_{i12}\frac{B'_{b}[f]B'_{b}[g]}{\kappa_{2}}
=0
\end{eqnarray}
where $F=(f(x),B'_{a}[f], -B'_{b}[f]),G=(g(x),B'_{a}[g], -B'_{b}[g])
\in dom(\mathfrak{R})$.
\end{cor}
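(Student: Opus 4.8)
The plan is to obtain the orthogonality relation directly from the self-adjointness of $\mathfrak{R}$ proved in Theorem~\ref{2.1}, mimicking the classical Sturm--Liouville argument. First I would record the operator reformulation: if $f$ and $g$ are eigenfunctions of the boundary value transmission problem $(\ref{1})$--$(\ref{5})$ belonging to eigenvalues $\lambda$ and $\mu$ respectively, then the vectors $F=(f(x),B'_{a}[f],-B'_{b}[f])$ and $G=(g(x),B'_{a}[g],-B'_{b}[g])$ lie in $dom(\mathfrak{R})$ and satisfy $\mathfrak{R}F=\lambda F$ and $\mathfrak{R}G=\mu G$, which is precisely the operator-equation form of the problem stated above. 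Since $\mathfrak{R}$ is self-adjoint, all of its eigenvalues are real; in particular $\mu=\overline{\mu}$, and (because the coefficients of the problem are real) the eigenfunctions themselves may be taken real-valued.

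Next I would use the symmetry of $\mathfrak{R}$ together with the two eigen-equations to compute
\[
\lambda\,<F,G>_{\mathcal{H}}=<\mathfrak{R}F,G>_{\mathcal{H}}=<F,\mathfrak{R}G>_{\mathcal{H}}=\overline{\mu}\,<F,G>_{\mathcal{H}}=\mu\,<F,G>_{\mathcal{H}},
\]
so that $(\lambda-\mu)<F,G>_{\mathcal{H}}=0$, and hence $<F,G>_{\mathcal{H}}=0$ because $\lambda\neq\mu$.

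Finally I would unravel the definition of the inner product on $\mathcal{H}$: writing $<F,G>_{\mathcal{H}}=<f,g>_{\mathcal{H}_{1}}+\prod_{i=0}^{n}\theta_{i34}\,\frac{f_{1}\overline{g_{1}}}{\kappa_{1}}+\prod_{i=0}^{n}\theta_{i12}\,\frac{f_{2}\overline{g_{2}}}{\kappa_{2}}$ with $f_{1}=B'_{a}[f]$, $f_{2}=-B'_{b}[f]$, $g_{1}=B'_{a}[g]$, $g_{2}=-B'_{b}[g]$, and observing that $f_{2}\overline{g_{2}}=B'_{b}[f]\overline{B'_{b}[g]}$ since the two minus signs cancel, the identity $<F,G>_{\mathcal{H}}=0$ becomes exactly the claimed relation $(\ref{2.3})$ once the conjugation bars are dropped for real eigenfunctions. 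There is essentially no real obstacle here: the content is carried entirely by Theorem~\ref{2.1}, and the only place where care is needed is the bookkeeping of the scalar weights $\prod_{i=0}^{n}\theta_{i34}$ and $\prod_{i=0}^{n}\theta_{i12}$ and of the sign in the second coordinate $f_{2}=-B'_{b}[f]$, which is where a hasty computation could introduce an error.
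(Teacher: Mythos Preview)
Your argument is correct and is exactly the standard deduction the paper intends: the corollary is stated without proof immediately after the symmetry/self-adjointness of $\mathfrak{R}$, and your computation $(\lambda-\mu)\,<F,G>_{\mathcal{H}}=0$ together with the expansion of $<F,G>_{\mathcal{H}}$ is precisely the implied one-line justification. The bookkeeping you flag (the cancellation of the two minus signs in $f_{2}\overline{g_{2}}$ and the reality of the eigenfunctions) is handled correctly.
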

\begin{thm}\label{2.1}
The  operator $\mathfrak{R}$  has only point spectrum, i.e.
$\sigma(\mathfrak{R})=\sigma_p(\mathfrak{R})$.
\end{thm}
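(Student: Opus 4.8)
The plan is to deduce the statement from self-adjointness of $\mathfrak{R}$ together with the fact that $\mathfrak{R}$ possesses a \emph{compact resolvent}. Recall that a self-adjoint operator is closed, has empty residual spectrum, and satisfies $\mathbb{C}\setminus\mathbb{R}\subset\rho(\mathfrak{R})$ (the resolvent set); in particular $(\mathfrak{R}-iI)^{-1}$ is a bounded operator on $\mathcal{H}$ with range $dom(\mathfrak{R})$. By the well-known theorem that a self-adjoint operator with compact resolvent has purely discrete spectrum (isolated eigenvalues of finite multiplicity, with no finite accumulation point), it therefore suffices to show that $(\mathfrak{R}-iI)^{-1}:\mathcal{H}\to\mathcal{H}$ is compact; this simultaneously rules out a continuous part of the spectrum and yields $\sigma(\mathfrak{R})=\sigma_p(\mathfrak{R})$, in agreement with the fact noted earlier that the eigenvalues are exactly the (infinitely many) zeros of $\omega(\lambda)$.

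For the compactness I would work through the domain. Since the weighted inner products on $\mathcal{H}$ and $\mathcal{H}_1$ are equivalent to the standard ones (as already noted) and the weights $\prod\theta_{i12}$, $\prod\theta_{i34}$, $\rho_{k+1}^{-2}$, $\kappa_1^{-1}$, $\kappa_2^{-1}$ are positive constants, the graph norm $\|U\|_{\mathcal{H}}+\|\mathfrak{R}U\|_{\mathcal{H}}$ of $U=(u(x),B'_a[u],-B'_b[u])\in dom(\mathfrak{R})$ is comparable to $\sum_{i=1}^{n+1}\big(\|u\|_{L_2(\Omega_i)}+\|\ell u\|_{L_2(\Omega_i)}\big)+|B'_a[u]|+|B'_b[u]|$. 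On each $\Omega_i=(\xi_{i-1},\xi_i)$ one has $\rho(x)=\rho_i^2>0$ constant and $q$ bounded (it is continuous on $\Omega_i$ with finite one-sided limits at the endpoints), so from $u''=(q\,u-\ell u)/\rho_i^2$ the graph norm controls $\|u\|_{H^2(\Omega_i)}$. Hence a set bounded in the graph norm is, on each of the finitely many subintervals, bounded in $H^2$, so precompact in $L_2(\Omega_i)$ by Rellich's theorem (equivalently, $u$ and $u'$ are then uniformly bounded and equicontinuous, so Arzel\`a--Ascoli applies), while its image in the $\mathbb{C}^2$ component lies bounded in a finite-dimensional space and is automatically precompact. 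Thus bounded subsets of $dom(\mathfrak{R})$ are precompact in $\mathcal{H}$. Since $(\mathfrak{R}-iI)U=F$ gives $\|U\|_{\mathcal{H}}+\|\mathfrak{R}U\|_{\mathcal{H}}\le 2\|U\|_{\mathcal{H}}+\|(\mathfrak{R}-iI)U\|_{\mathcal{H}}\le C\|F\|_{\mathcal{H}}$, the operator $(\mathfrak{R}-iI)^{-1}$ maps bounded sets of $\mathcal{H}$ into bounded subsets of $dom(\mathfrak{R})$, hence into precompact sets, i.e.\ it is compact. This closes the argument.

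Closer to the spirit of this paper, one may instead exhibit the resolvent explicitly: for real $\lambda^\ast$ with $\omega(\lambda^\ast)\neq0$ and arbitrary $F=(f(x),f_1,f_2)\in\mathcal{H}$, build a particular solution of $\ell u-\lambda^\ast u=f$ on each $\Omega_i$ by variation of parameters against the pair $\phi_i(\cdot,\lambda^\ast),\chi_i(\cdot,\lambda^\ast)$ — whose Wronskian $\omega_i(\lambda^\ast)=\big(\prod_{j<i}\theta_{j34}/\theta_{j12}\big)\omega(\lambda^\ast)$ is nonzero because $\theta_{ijk}>0$ — and then add a global combination $c_1\phi(x,\lambda^\ast)+c_2\chi(x,\lambda^\ast)$. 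Because $\phi$ was constructed to satisfy the boundary condition at $a$ and the transmission conditions $\pounds_i=\mathfrak{L}_i=0$ (recursions (\ref{7})--(\ref{9})) and $\chi$ the boundary condition at $b$ together with the same transmission conditions (recursions (\ref{10})--(\ref{12})), these interface conditions are inherited automatically, so only two scalar equations at $x=a$ and $x=b$ remain; their determinant is a nonzero multiple of $\omega(\lambda^\ast)$, hence $c_1,c_2$ are uniquely determined. This produces a Green's kernel $G(x,y;\lambda^\ast)$ that is continuous on each rectangle $\overline{\Omega_i}\times\overline{\Omega_j}$, hence bounded on the bounded set $\Omega\times\Omega$, so the associated integral operator together with the finite-rank terms in $f_1,f_2$ gives a bounded (indeed Hilbert--Schmidt, hence compact) inverse $(\mathfrak{R}-\lambda^\ast I)^{-1}$ defined on all of $\mathcal{H}$; thus $\lambda^\ast\in\rho(\mathfrak{R})$, and again $\sigma(\mathfrak{R})=\sigma_p(\mathfrak{R})$.

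I expect the main obstacle to be bookkeeping rather than anything conceptual. In the first route it is verifying carefully that the norm equivalence and the boundedness of $q$ really let the $\mathcal{H}$-graph norm dominate $\|u\|_{H^2(\Omega_i)}$ uniformly, with the finite one-sided limits at the points $\xi_i$ handled so that $u$ genuinely belongs to $H^2$ up to the endpoints of each $\Omega_i$. In the explicit route it is threading the inhomogeneity correctly through the recursively defined matching conditions (\ref{8})--(\ref{12}), so that the patched function satisfies \emph{all} of $\pounds_i(u)=\mathfrak{L}_i(u)=0$ and both inhomogeneous boundary conditions at once, and then checking that the resulting $\mathbb{C}^2$-coordinates equal $B'_a[u]$ and $-B'_b[u]$ and that $\ell u-\lambda^\ast u=f\in\mathcal{H}_1$, so that $U\in dom(\mathfrak{R})$. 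Once the non-vanishing of $\omega(\lambda^\ast)$ (equivalently of each $\omega_i(\lambda^\ast)$) and the boundedness of $G$ on the bounded domain are in hand, the spectral conclusion is immediate.
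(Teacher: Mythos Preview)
Your proposal is correct. The paper actually states this theorem without proof; the nearest thing to a proof in the text is the Green's-function construction of Section~4, which solves $(\lambda I-\mathfrak{R})U=F$ explicitly whenever $\omega(\lambda)\neq0$ and thereby shows that every non-eigenvalue lies in the resolvent set. Your second route is exactly this argument, with the interface bookkeeping honestly acknowledged.

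Your first route---controlling the $H^2(\Omega_i)$ norm by the graph norm and then invoking Rellich/Arzel\`a--Ascoli to obtain a compact resolvent, followed by the standard spectral theorem for self-adjoint operators with compact resolvent---is a genuinely different path. In the paper, compactness of the resolvent is recorded as a separate (also unproved) Lemma placed \emph{after} this theorem, so the paper's intended logical order is the explicit-resolvent construction rather than the abstract-embedding argument. What each buys: the abstract route is cleaner, avoids all the transmission-condition bookkeeping, and simultaneously yields that the eigenvalues are isolated and of finite multiplicity; the explicit route, on the other hand, produces the Green's kernel (\ref{23}) and the integral representation (\ref{2.16}) as by-products, which is the paper's real objective.
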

\begin{Lemma}\label{2.1}
The  operator $\mathfrak{R}$  has compact resolvent, i.e. for
$\forall \delta \in \mathbb{R}/\sigma_p(\mathfrak{R}),
(\mathfrak{R}-\delta I) $ is compact in $\mathcal{H}$.
\end{Lemma}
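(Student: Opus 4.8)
The plan is to exploit that $\mathfrak{R}$ is self-adjoint with $\sigma(\mathfrak{R})=\sigma_p(\mathfrak{R})$ (established in the preceding theorems): for every real $\delta\notin\sigma_p(\mathfrak{R})$ we have $\delta\notin\sigma(\mathfrak{R})$, so $R_\delta:=(\mathfrak{R}-\delta I)^{-1}$ is a bounded, everywhere-defined operator on $\mathcal{H}$, and it remains only to prove it is \emph{compact}. Equivalently, I would show that the natural injection of $dom(\mathfrak{R})$, equipped with the graph norm of $\mathfrak{R}$, into $\mathcal{H}$ is compact, since $R_\delta$ factors through this injection.

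First I would take an arbitrary bounded sequence $(F_k)$ in $\mathcal{H}$, set $G_k=R_\delta F_k=(g_k(x),g_{k,1},g_{k,2})\in dom(\mathfrak{R})$, and observe that both $\|G_k\|_{\mathcal{H}}$ and $\|\mathfrak{R}G_k\|_{\mathcal{H}}=\|F_k+\delta G_k\|_{\mathcal{H}}$ stay bounded; hence $(G_k)$ is bounded in the graph norm. Next, on each of the finitely many subintervals $\Omega_i=(\xi_{i-1},\xi_i)$ the relation $-\rho_i^2 g_k''+q\,g_k=\ell g_k$ holds, where $\ell g_k$ denotes the first component of $\mathfrak{R}G_k$. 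Because the weights entering $\langle\cdot,\cdot\rangle_{\mathcal{H}_1}$ are positive constants, the $\mathcal{H}_1$-norm is equivalent to the ordinary $L_2(\Omega)$-norm, so $g_k$ and $\ell g_k$ are bounded in $L_2(\Omega_i)$; since $\rho_i^2>0$ and $q$ is bounded on $\overline{\Omega_i}$ (it has finite one-sided limits at the $\xi_i$), this forces $g_k''$ to be bounded in $L_2(\Omega_i)$, and therefore $(g_k)$ is bounded in the Sobolev space $W_2^2(\Omega_i)$. By Rellich's compactness theorem the embedding of $W_2^2(\Omega_i)$ into $L_2(\Omega_i)$ is compact, so some subsequence of $(g_k)$ converges in $L_2(\Omega_i)$; doing this successively for $i=1,\dots,n+1$ and passing to a diagonal subsequence yields convergence in $\mathcal{H}_1$. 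The components $(g_{k,1})$, $(g_{k,2})$ live in the finite-dimensional $\mathbb{C}^2$ and are bounded, so along a further subsequence they converge too. Thus $(G_k)$ has a subsequence convergent in $\mathcal{H}$, which shows $R_\delta$ is compact; as $\delta$ was arbitrary in $\mathbb{R}\setminus\sigma_p(\mathfrak{R})$, the operator $\mathfrak{R}$ has compact resolvent.

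The only genuinely delicate point is converting the graph-norm bound into the uniform $W_2^2$-bound on each piece: one must use $\inf_{\Omega_i}\rho_i^2>0$ together with the boundedness of $q$ on $\overline{\Omega_i}$ to pass from a bound on $-\rho_i^2 g_k''+q g_k$ and on $g_k$ in $L_2$ to a bound on $g_k''$ in $L_2$; everything after that is routine finite-dimensional and Rellich-type bookkeeping, and the transmission/boundary conditions play no role in the compactness argument itself.

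An alternative route, more in keeping with the rest of the paper, is to build the Green's function $G(x,y;\delta)$ of $(\ref{1})$--$(\ref{5})$ from the fundamental solutions $\phi$ and $\chi$, to notice that $R_\delta$ acts as the integral operator with kernel $G(\cdot,\cdot;\delta)$ plus a finite-rank correction stemming from the $\mathbb{C}^2$ summand of $\mathcal{H}$, to observe that $G(\cdot,\cdot;\delta)$ is piecewise continuous and bounded on the compact square $[a,b]\times[a,b]$ and hence defines a Hilbert--Schmidt operator, and finally to conclude that $R_\delta$ is compact as the sum of a Hilbert--Schmidt operator and a finite-rank one. Either argument gives the claim.
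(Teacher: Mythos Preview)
Your proposal is correct. Note, however, that the paper does not supply a self-contained proof of this Lemma at the point where it is stated; the intended justification is the explicit Green's-function construction carried out in the following section, where the resolvent is represented as an integral operator with the bounded piecewise-continuous kernel $G(x,y;\lambda)$ of~(\ref{23}), from which Hilbert--Schmidt compactness (plus a finite-rank contribution from the $\mathbb{C}^{2}$ component) follows. This is exactly the ``alternative route'' you sketch at the end of your proposal.

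Your primary argument via the Rellich embedding is a genuinely different approach. It is more operator-theoretic and avoids the explicit variation-of-parameters computation entirely: you use only that $\rho_i^2$ is a positive constant and $q$ is bounded on each $\overline{\Omega_i}$ to convert a graph-norm bound into a uniform $W_2^2(\Omega_i)$ bound, and then invoke compactness of $W_2^2(\Omega_i)\hookrightarrow L_2(\Omega_i)$ on each of the finitely many pieces. This is shorter and makes clear that compactness of the resolvent is insensitive to the specific form of the transmission and boundary conditions. The paper's route, by contrast, yields more: an explicit closed-form resolvent kernel built from the fundamental solutions $\phi$ and $\chi$, which is the paper's main object of interest. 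One minor point worth making explicit in your write-up is the passage from bounds on $g_k$ and $g_k''$ in $L_2(\Omega_i)$ to a bound on $g_k'$ (hence the full $W_2^2$ bound); this follows from the standard interpolation inequality on a bounded interval, but should be mentioned.
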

By applying the above results, we obtain the following theorem.
\begin{thm}\label{2.1}
The eigenfunctions of the problem $(\ref{1})-(\ref{5})$, augmented
to become eigenfunctions of \ $\mathfrak{R}$, are complete in
$\mathcal{H}$, i.e. if we let $\{\Psi_s= (\Psi_s(x),B'_{a}[\Psi_s],
-B'_b[\Psi_s]);n \in N\}$ be a maximum set of orthonormal
eigenfunctions of $\mathfrak{R}$, where $\{\Psi_s(x);s \in N\}$ are
eigenfunctions of \ the problem $(\ref{1})-(\ref{5})$, then for all
$(f(x),f_1,f_2) \in \mathcal{H}$,
\begin{eqnarray*}\label{2.3}f&=&\sum\limits_{s=1}^{\infty}\{\sum\limits_{k=0}^{n}\frac{1}{\rho_{k+1}^{2}}\prod\limits_{i=0}^{k}\theta_{i12}
\prod\limits_{i=k+1}^{n+1}\theta_{i34}\int\limits_{\xi_{k}+}^{\xi_{k+1}-}f(x)\Psi_s(x)dx+\prod\limits_{i=0}^{n}\theta_{i34}\frac{f_{1}B'_{a}[\Psi_s]}{\kappa_{1}}
\nonumber\\ &+&
\prod\limits_{i=0}^{n}\theta_{i12}\frac{f_{2}(-B'_b[\Psi_s])}{\kappa_{2}}\}\Psi_s.\end{eqnarray*}\label{2.3}
\end{thm}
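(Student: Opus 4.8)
The plan is to deduce completeness from the abstract spectral theory already assembled: $\mathfrak{R}$ is self-adjoint (the self-adjointness theorem), has purely point spectrum (the point-spectrum theorem), and has compact resolvent (the Lemma). First I would fix a real number $\delta\notin\sigma_p(\mathfrak{R})$ and consider $T:=(\mathfrak{R}-\delta I)^{-1}$. By self-adjointness of $\mathfrak{R}$ together with the Lemma, $T$ is a bounded, self-adjoint, compact operator on $\mathcal{H}$; since its range equals $dom(\mathfrak{R})$, which is dense in $\mathcal{H}$, the operator $T$ is injective, so $0$ is not an eigenvalue of $T$. The Hilbert--Schmidt theorem for compact self-adjoint operators then furnishes an orthonormal basis $\{\Psi_s\}_{s\in\mathbb{N}}$ of $\mathcal{H}$ consisting of eigenvectors of $T$, with nonzero real eigenvalues $\mu_s\to 0$.

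Next I would translate this back to $\mathfrak{R}$. The relation $T\Psi=\mu\Psi$ with $\mu\neq0$ holds precisely when $\Psi\in dom(\mathfrak{R})$ and $\mathfrak{R}\Psi=(\delta+\mu^{-1})\Psi$, so each $\Psi_s$ is an eigenvector of $\mathfrak{R}$ with eigenvalue $\lambda_s=\delta+\mu_s^{-1}$, and $\{\Psi_s\}$ is a complete orthonormal system of eigenvectors of $\mathfrak{R}$ in $\mathcal{H}$. Invoking the operator reformulation of the problem $(\ref{1})-(\ref{5})$, every such $\Psi_s$ has the form $\Psi_s=(\Psi_s(x),B'_{a}[\Psi_s],-B'_{b}[\Psi_s])$ with $\Psi_s(x)$ an eigenfunction of the boundary-value-transmission problem, and conversely every eigenfunction of $(\ref{1})-(\ref{5})$ arises this way; moreover the $\Psi_s(x)$ may be normalized to be real-valued.

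Finally I would unwind the abstract eigenfunction expansion. For an arbitrary $F=(f(x),f_1,f_2)\in\mathcal{H}$, completeness of $\{\Psi_s\}$ gives $F=\sum_{s=1}^{\infty}\langle F,\Psi_s\rangle_{\mathcal{H}}\,\Psi_s$ with convergence in $\mathcal{H}$. Substituting the explicit definitions of $\langle\cdot,\cdot\rangle_{\mathcal{H}}$ and $\langle\cdot,\cdot\rangle_{\mathcal{H}_1}$ and the shape of $\Psi_s$ (the conjugation dropping since the eigenfunctions are real), the coefficient $\langle F,\Psi_s\rangle_{\mathcal{H}}$ becomes exactly the bracketed quantity in the statement, namely the weighted sum of the integrals $\int_{\xi_k+}^{\xi_{k+1}-}f(x)\Psi_s(x)\,dx$ together with the two boundary terms involving $\kappa_1$ and $\kappa_2$. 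Reading off the first component of $F=\sum_s\langle F,\Psi_s\rangle_{\mathcal{H}}\Psi_s$ (the $L_2$ part) then yields precisely the claimed series for $f$.

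The conceptually substantive step is the invocation of the Hilbert--Schmidt theorem, and the one point there that needs genuine verification rather than being automatic is that no eigenvector is lost, i.e. $\ker T=\{0\}$; this is exactly where one uses $\delta\notin\sigma(\mathfrak{R})=\sigma_p(\mathfrak{R})$, ensuring the orthonormal eigenbasis of $T$ is simultaneously an eigenbasis of $\mathfrak{R}$. Everything else --- the equivalence of $T$-eigenvectors with $\mathfrak{R}$-eigenvectors and with eigenfunctions of $(\ref{1})-(\ref{5})$, and the expansion of the inner product --- is routine bookkeeping.
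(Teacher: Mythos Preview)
Your proposal is correct and is precisely the argument the paper has in mind: the paper does not give an explicit proof of this theorem at all, merely writing ``By applying the above results, we obtain the following theorem,'' where ``the above results'' are exactly the self-adjointness of $\mathfrak{R}$, the identity $\sigma(\mathfrak{R})=\sigma_p(\mathfrak{R})$, and the compactness of the resolvent. You have simply spelled out the standard spectral-theoretic deduction (compact self-adjoint resolvent $\Rightarrow$ Hilbert--Schmidt orthonormal eigenbasis $\Rightarrow$ eigenbasis for $\mathfrak{R}$ $\Rightarrow$ explicit expansion via the defined inner product) that the paper leaves to the reader.
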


\section{\textbf{Green's Function }}
 Now let $\lambda\in\mathbb{C}$ not be an eigenvalue of $\mathfrak{R}$ and
consider the operator equation
\begin{eqnarray}\label{141u}
(\lambda I-\mathfrak{R})u=f(x),
\end{eqnarray}
for arbitrary \ $u=(u(x),u_{1},u_{2}) \in \mathcal{H}$. This
operator equation is equivalent to the following inhomogeneous BVTP
\begin{eqnarray}\label{141}
&& \ \ \ \ \ \ \ \ \ \ \ \ \ \ \ (\lambda -\mathfrak{R} )u(x)= f(x),
\ \ x \in \Omega
\\ && \label{141v} \pounds_i(u)=\mathfrak{L}_i(u)=0, \ \
\mathcal{L}_1(u)=0, \ \ \mathcal{L}_2(u)=0
\end{eqnarray}
We shall search the general solution of the non-homogeneous
differential Equation (\ref{141}) in the form
\begin{equation}
u(x,\lambda )=\left\{
\begin{array}{c}
f_{11\lambda }(x)\phi_{1\lambda}(x)+f_{12\lambda}(x)\chi_{1\lambda}(x)%
\hbox{ for }x\in \left( a,\xi_i\right) \\
f_{21\lambda}(x)\phi_{2\lambda}(x)+f_{22\lambda}(x
)\chi_{2\lambda}(x) \hbox{ for }x\in ( \xi_1,\xi_2)
\\...\\ f_{i1\lambda}(x)\phi_{i\lambda}(x)+f_{i2\lambda}(x
)\chi_{i\lambda}(x) \hbox{ for }x\in (\xi_n,b)
\end{array}
\right.  \label{(6.6)}
\end{equation}
where the functions $f_{ij\lambda }(x):=f_{ij}(x,\lambda )
(i=1,2,...n+1,\ j=1,2)$ are the solutions of the system of equations
\begin{eqnarray*}
&&\left\{
\begin{array}{c}
f'_{11\lambda }(x)\phi_{1\lambda}(x)+f'_{12\lambda }\chi_{1\lambda}(x)=0 \\
f'_{11\lambda }\phi'_{1\lambda}(x)+f'_{12\lambda
}\chi'_{1\lambda}(x)=\frac{f(x)}{\rho_1^{2}}
\end{array},
\right. \left\{
\begin{array}{c}
f'_{21\lambda}(x)\phi_{2\lambda}(x)+f'_{22\lambda}(x)\chi_{2\lambda}(x)=0 \\
f'_{21\lambda}(x)\phi'_{2\lambda}(x)+f'_{22\lambda}(x)\chi'_{2\lambda}(x)=\frac{f(x)}{\rho_2^{2}}
\end{array}
\right.\\&& ......\left\{
\begin{array}{c}
f'_{i1\lambda }(x)\phi_{i\lambda}(x)+f'_{i2\lambda }\chi_{i\lambda}(x)=0 \\
f'_{i1\lambda }\phi'_{1\lambda}(x)+f'_{i2\lambda
}\chi'_{i\lambda}(x)=\frac{f(x)}{\rho_i^{2}}
\end{array}
\right.
\end{eqnarray*}
for $x\in ( a,\xi_1), \ x\in ( \xi_1,\xi_2),...  \ x \in (\xi_i,b) \
(i=1,2,...n+1)$, respectively. Since $\lambda $ is not an eigenvalue
$ \omega(\lambda) \neq 0 $. Then using the transmission conditions
we have
\begin{eqnarray*}
&&f_{11\lambda}(x)=\frac{1}{\rho_1^{2}\omega_1(\lambda
)}\int\limits_{x}^{\xi_1}u(y)\chi_{1\lambda}(y)dy+f_{11}(\lambda), \
x\in (a,\xi_1)\\
&&f_{12\lambda}(x)=\frac{1}{\rho_1^{2}\omega_1(\lambda
)}\int\limits_{a}^{x}u(y)\phi_{1\lambda}(y)dy+f_{12}(\lambda), \
x\in (a,\xi_1)\\
&&f_{21\lambda}(x)=\frac{1}{\rho_2^{2}\omega_2(\lambda
)}\int\limits_{x}^{\xi_2}u(y)\chi_{2\lambda}(y)dy+f_{21}(\lambda), \
x\in ( \xi_1,\xi_2)\\
&&f_{22\lambda}(x)=\frac{1}{\rho_2^{2}\omega_2(\lambda
)}\int\limits_{\xi_1}^{x}u(y)\phi_{2\lambda}(y)dy+f_{22}(\lambda), \
x\in ( \xi_1,\xi_2)\\&&...\\
&&f_{(n+1)1\lambda}(x)=\frac{1}{\rho_{n+1}^{2}\omega_{n+1}(\lambda
)}\int\limits_{x}^{b}u(y)\chi_{1\lambda}(y)dy+f_{(n+1)1}(\lambda), \
x\in (\xi_n,b)\\
&&f_{(n+1)2\lambda}(x)=\frac{1}{\rho_{n+1}^{2}\omega_{n+1}(\lambda
)}\int\limits_{\xi_n}^{x}u(y)\phi_{i\lambda}(y)dy+f_{(n+1)2}(\lambda),
\
x\in (\xi_n,b)\\
\end{eqnarray*}
where $f_{ij}(\lambda) \ (i,j=1,2)$  are arbitrary functions of
parameter $\lambda$. Substituting this into $(\ref{(6.6)})$ gives
\begin{eqnarray}\label{(6.13)}
u(x,\lambda)=\left\{\begin{array}{c}
               \frac{\chi_{1\lambda}(x)}{\rho_1^{2}\omega_1(\lambda
)}\int_{a}^{x}\phi_{1\lambda}(y)f(y)dy +
\frac{\phi_{1\lambda}(x)}{\rho_1^{2}\omega_1(\lambda
)}\int_{x}^{\xi_1}\chi_{1\lambda}(y)f(y)dy \\ +f_{11}(\lambda)\phi_{1\lambda}(x)+f_{12}(\lambda)\chi_{1\lambda}(x) \  \ \ \ \ \ \ \ \ for \  x \in (a,\xi_1) \\
 \frac{\chi_{2\lambda}(x)}{\rho_2^{2}\omega_2(\lambda
)}\int_{\xi_1}^{x}\phi_{2\lambda}(y)f(y)dy +
\frac{\phi_{2\lambda}(x)}{\rho_2^{2}\omega_2(\lambda
)}\int_{x}^{\xi_2}\chi_{2\lambda}(y)f(y)dy \\
+f_{21}(\lambda)\phi_{2\lambda}(x)+f_{22}(\lambda)\chi_{2\lambda}(x)
\  \ \ \ \ \ \ \ \ for \  x \in (\xi_1,\xi_2) \\...\\
\frac{\chi_{n+1\lambda}(x)}{\rho_{n+1}^{2}\omega_{n+1}(\lambda
)}\int_{\xi_n}^{x}\phi_{(n+1)\lambda}(y)f(y)dy +
\frac{\phi_{(n+1)\lambda}(x)}{\rho_{n+1}^{2}\omega_{n+1}(\lambda
)}\int_{x}^{b}\chi_{(n+1)\lambda}(y)f(y)dy \\
+f_{(n+1)1}(\lambda)\phi_{(n+1)\lambda}(x)+f_{(n+1)2}(\lambda)\chi_{(n+1)\lambda}(x)
\  \ \ \ \ \ \ \ \ for \  x \in (\xi_n,b)
\end{array}\right.
\end{eqnarray}
By differentiating we have
\begin{eqnarray}\label{(6.133)}
u'(x,\lambda)=\left\{\begin{array}{c}
               \frac{\chi'_{1\lambda}(x)}{\rho_1^{2}\omega_1(\lambda
)}\int_{a}^{x}\phi_{1\lambda}(y)f(y)dy +
\frac{\phi'_{1\lambda}(x)}{\rho_1^{2}\omega_1(\lambda
)}\int_{x}^{\xi_1}\chi_{1\lambda}(y)f(y)dy \\ +f_{11}(\lambda)\phi'_{1\lambda}(x)+f_{12}(\lambda)\chi'_{1\lambda}(x), \  \ \ \ \ \ \ \  \  x \in (a,\xi_1) \\
 \frac{\chi'_{2\lambda}(x)}{\rho_2^{2}\omega_2(\lambda
)}\int_{\xi_1}^{x}\phi_{2\lambda}(y)f(y)dy +
\frac{\phi'_{2\lambda}(x)}{\rho_2^{2}\omega_2(\lambda
)}\int_{x}^{\xi_2}\chi_{2\lambda}(y)f(y)dy \\
+f_{21}(\lambda)\phi'_{2\lambda}(x)+f_{22}(\lambda)\chi'_{2\lambda}(x),
 \ \ \ \ \ \ \  \  x \in (\xi_1,\xi_2) \\...\\
\frac{\chi'_{n+1\lambda}(x)}{\rho_{n+1}^{2}\omega'_{n+1}(\lambda
)}\int_{\xi_n}^{x}\phi_{(n+1)\lambda}(y)f(y)dy +
\frac{\phi_{(n+1)\lambda}(x)}{\rho_{n+1}^{2}\omega_{n+1}(\lambda
)}\int_{x}^{b}\chi_{(n+1)\lambda}(y)f(y)dy \\
+f_{(n+1)1}(\lambda)\phi'_{(n+1)\lambda}(x)+f_{(n+1)2}(\lambda)\chi'_{(n+1)\lambda}(x),
\  \ \ \ \ \ \ \ \  \  x \in (\xi_n,b)
\end{array}\right.
\end{eqnarray}
By using $(\ref{(6.13)}), (\ref{(6.133)})$ and the conditions
$(\ref{141v})$ we can derive that\\$f_{12}(\lambda)=0, \
f_{(n+1)}(\lambda)=0,$
\begin{eqnarray*}\label{19}f_{i1}(\lambda)&=&\sum\limits_{s=i+1}^{n+1}
\frac{1}{\rho_s^{2}\omega_s(\lambda)}\int_{\xi_{s-1}}^{\xi_s}\chi_{s\lambda}(y)f(y)dy,
\ \ i=1,2,...n
 \nonumber \\\label{21}
f_{(i+1)2}(\lambda)&=&\sum\limits_{s=1}^{i}
\frac{1}{\rho_s^{2}\omega_s(\lambda)}\int_{\xi_{s-1}}^{\xi_s}\phi_{s\lambda}(y)f(y)dy,
\ \ i=1,2,...n
\end{eqnarray*}
Putting  in (\ref{(6.13)}) gives
\begin{eqnarray}\label{22}
&u(x,\lambda)=\left\{\begin{array}{c}
\frac{\chi_{1\lambda}(x)}{\rho_1^{2}\omega_1(\lambda
)}\int_{a}^{x}\phi_{1\lambda}(y)f(y)dy +
\frac{\phi_{1\lambda}(x)}{\rho_1^{2}\omega_1(\lambda
)}\int_{x}^{\xi_1}\chi_{1\lambda}(y)f(y)dy \\
+\phi_{1\lambda}(x)\sum\limits_{s=2}^{n+1}
\frac{1}{\rho_s^{2}\omega_s(\lambda)}\int_{\xi_{s-1}}^{\xi_s}\chi_{s\lambda}(y)f(y)dy, \  \ \ \ \ \ \ \ \ \  x \in (a,\xi_1) \\
\frac{\chi_{2\lambda}(x)}{\rho_2^{2}\omega_2(\lambda
)}\int_{\xi_1}^{x}\phi_{2\lambda}(y)f(y)dy +
\frac{\phi_{2\lambda}(x)}{\rho_2^{2}\omega_2(\lambda
)}\int_{x}^{\xi_2}\chi_{2\lambda}(y)f(y)dy \\
+\phi_{2\lambda}(x) \sum\limits_{s=2}^{n+1}
\frac{1}{\rho_s^{2}\omega_s(\lambda)}\int_{\xi_{s-1}}^{\xi_s}\chi_{s\lambda}(y)f(y)dy\\
+\chi_{2\lambda}(x)\frac{1}{\rho_1^{1}\omega_1(\lambda)}\int_{\xi_{a}}^{\xi_1}\phi_{1\lambda}(y)f(y)dy,
\ \ \ \ \ \ \  x \in (\xi_1,\xi_2) \\...\\
\frac{\chi_{n+1\lambda}(x)}{\rho_{n+1}^{2}\omega_{n+1}(\lambda
)}\int_{\xi_n}^{x}\phi_{(n+1)\lambda}(y)f(y)dy +
\frac{\phi_{(n+1)\lambda}(x)}{\rho_{n+1}^{2}\omega_{n+1}(\lambda
)}\int_{x}^{b}\chi_{(n+1)\lambda}(y)f(y)dy \\+
\chi_{(n+1)\lambda}(x)\sum\limits_{s=1}^{n}
\frac{1}{\rho_s^{2}\omega_s(\lambda)}\int_{\xi_{s-1}}^{\xi_s}\phi_{s\lambda}(y)f(y)dy,
\  \ \ \ \ \ \ \ \  \  x \in (\xi_n,b)
\end{array}\right.
\end{eqnarray}
Let us introduce the Green's function as
\begin{eqnarray}\label{23}
G(x,y;\lambda)=\left\{\begin{array}{c}
\frac{\phi_{i\lambda}(y)\chi_{\lambda}(x)}{\omega_i(\lambda)}, \ \ \ a<y \leq x<b\, \ \ \ x,y\neq\xi_i \ Ý=1,2,...n \\

\frac{\phi_{\lambda}(x)\chi_{i\lambda}(y)}{\omega_i(\lambda)}, \ \ \ a<x \leq y<b\, \ \ \ x,y\neq\xi_i \ Ý=1,2,...n \\
\end{array}\right.
\end{eqnarray}
Then from (\ref{22}) and (\ref{23}) it follows that the considered
problem (\ref{141}), (\ref{141v}) has an unique solution given by
\begin{eqnarray}\label{2.16}
u(x,\lambda)&=& \int_{a}^{b} G(x,y;\lambda)f(y)dy
\end{eqnarray}

\end{document}